\documentclass[12pt,twoside]{article}
\usepackage{verbatim,amssymb,amsmath}
\usepackage{graphicx,array}
\usepackage{mystyle}
\usepackage{color}

\renewcommand{\b}{\mathfrak b}

\title{Weak Diffeomorphisms and Extremals for Scalar Conservation Laws}
\author{
{Prerona Dutta} 
\thanks{Affiliation - Department of Mathematics, Xavier University of Louisiana : pdutta@xula.edu}{, Barbara Lee Keyfitz}
\thanks{Affiliation - Department of Mathematics, The Ohio State University : keyfitz.2@osu.edu\\
}
}
\begin{document}
\bibliographystyle{abbrv}
\maketitle
%\begin{center}{\em \today}\end{center}

\begin{abstract} 
Scalar conservation laws in one space variable allow a Lagrangian (particle path) formulation. The Lagrangian trajectory in the infinite-dimensional group of diffeomorphisms on the physical space can be written as a system of conservation laws. The relation between solutions of the Cauchy problem for the conservation law and solutions of the corresponding Cauchy problem on the diffeomorphism group extends to weak solutions of the coresponding problems. The correspondence between particle paths and transport equations is analogous to that between a Lie group and the corresponding Lie algebra.

This paper establishes that for scalar conservation laws
the particle paths are extremals of an action functional on the space
of diffeomorphisms; that is, they are geodesics in some metric.
In some examples of systems of conservation laws, 
including the physical example of isentropic gas dynamics in one space 
dimension, diffeomorphism representations also exist 
and may be interpreted as extremals of action functionals.

\end{abstract}

%%%%%%%%%%%%%%%%%%%%
\section{Introduction}

This paper \footnote{2020 Mathematics Subject Classification: Primary 35L65; Secondary 35L45, 58E30} continues the exploration of
an intriguing connection between scalar hyperbolic conservation laws and elements of
an infinite-dimensional diffeomorphism group.
The idea that connections of this sort might exist originated with Vladimir Arnold in a classic paper 
published in 1966 \cite{Ar2} and has been put to use by many authors since, for example the 
celebrated 1970 result by David Ebin and Jerrold Marsden, \cite{EM}, 
which provided a new way of solving the incompressible Euler equations of a perfect fluid.
A survey by Michor \cite{Mich} puts these ideas in context.

In the case of fluid dynamics equations, the so-called Eulerian form describes  fluid behavior by 
means of equations for the density, velocity and pressure of the fluid at each point in space.
On the other hand, the position at a 
later time of a particle of fluid initially at a given position constitutes a diffeomorphism of space.
This is specified by the so-called Lagrangian equations of the fluid (though the 
underlying infinite-dimensional group of diffeomorphisms did not classically figure in the analysis).
In the work of Ebin and Marsden on incompressible fluids, the diffeomorphisms are volume-preserving, 
and the group motion that solves the initial-value problem is a geodesic curve under a right-invariant 
Riemannian metric.

Extensions of these ideas have featured in much recent research. 
Adrian Constantin and Boris Kolev \cite{CoKo} start with an abstract group of 
$C^\infty$ diffeomorphisms, ${\cal D}(M)$, 
on the real line or on the circle, and identify geodesics under various Riemannian metrics.
In the corresponding algebras these objects
satisfy partial differential or integro-differential equations.
Using the $L^2$ metric,
with the notation $\gamma(x,t)$ for elements of the diffeomorphism 
group, Constantin and Kolev find that the elements of the 
corresponding algebra, $u(x,t) =\gamma_t\circ\gamma^{-1}$, satisfy the 
quasilinear transport equation $u_t+3uu_x=0$.
Their approach determines the geodesic flow geometrically by
constructing a metric covariant derivative and a Riemannian connection
on  ${\cal D}(M)$.
They also show that the same equation results from seeking 
a length-minimizing path on the
space ${\cal D}^k(M)$ of $C^k$ diffeomorphisms, for $k\geq 2$.
To be precise, they find an extremal of the action on ${\cal D}^k(M)$ given by the right-invariant $L^2$ metric.
Implicit in their derivation is that the extremal in ${\cal D}(M)$
or ${\cal D}^k(M)$ also satisfies a partial differential equation. 
(As noted in \cite{CoKo},  ${\cal D}^k(M)$ lacks the smoothness requirements to be a Lie
group.)

With this result as a point of departure, \cite{Dutta1} and \cite{HKT} showed that initial value problems for the 
equations, both in the group of diffeomorphisms (the Lagrangian formulation) 
and in the corresponding algebra (the Eulerian formulation) are in fact Cauchy problems for
hyperbolic conservation laws.
Both possess weak solutions which exist for all positive time, and the correspondence
between the problems extends to their weak solutions\footnote{Weak solutions of conservation
laws are not unique without an admissibility condition.
With standard admissibility conditions for both formulations, this correspondence is between
admissible weak solutions for both.}.
The weak solutions in the algebra are no longer diffeomorphisms; they are bi-Lipschitz
homeomorphisms, described in those papers as `weak diffeomorphisms'.
Every Cauchy problem for a scalar conservation law in a single space variable
and a number of systems admit such a `Lagrangian' formulation.

Missing from the approach in \cite{Dutta1} and \cite{HKT} is 
the motivating feature of the work of Constantin and Kolev: 
the relation of the diffeomorphism flows to geodesics.
The purpose of this paper is to show that the diffeomorphism flows  
are extremals of an action functional defined on the algebra.
The quadratic, $L^2$, action yields the equation noted above, $u_t+3uu_x=0$.
Extending the correspondence to weak solutions determines the
correct conservation form of this equation to be 
\begin{equation} \label{correct}
(\sqrt{u})_t + ((\sqrt{u})^3)_x=0\,,
\end{equation}
which requires a sign for $u$, as explained in Section \ref{back}.

%%%%%%%%%%%%%%
\subsection{Background} \label{back}

The papers \cite{Dutta1} and \cite{HKT} cited above demonstrate a correspondence 
between weak solutions $\rho\in BV\cap L^1_{\textrm{loc}}$
of the Cauchy problem
for a scalar conservation law
\begin{equation} \label{csc}
\rho_t + f(\rho)_x=0\,,\quad \rho(x,0) = \rho_0(x)\,,\quad x\in M,
\end{equation}
and bi-Lipschitz homeomorphisms $\gamma$ recovered from a conservation law system
of the form
\begin{equation} \label{tcs}
\begin{split}  \eta_t&=F\left(\frac{v}{\eta}\right)_{\!x}\,,\quad \eta(x,0)=1,\\
			v_t&=0\,,\quad \qquad \quad v(x,0) = u_0(x)\,.
\end{split}
\end{equation}
It is noteworthy that \eqref{tcs} is a `Temple system', a particular type of conservation law system that
has large-data solutions.
The domain, $M$, of $x$ may be either $\mathbb R$ or $\mathbb S^1$.

The relation between \eqref{csc} and \eqref{tcs} is as follows.
The problem \eqref{csc} must
first undergo an affine transformation to an equivalent Cauchy problem.
\begin{proposition}[Dutta \cite{Dutta1}] \label{one}
For the Cauchy problem \eqref{csc}, there are
mappings $\rho \mapsto \rho+L$ so that $\rho_0>0$, and $f\mapsto f+K$, with $L$ and $K$
constant, 
so that the quotient $f(a)/a$ is positive and strictly increasing on the range of $\rho_0$.
Solutions to the scaled equation are equivalent to solutions of the original problem.
\end{proposition}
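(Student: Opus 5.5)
The plan is to reduce the statement to an explicit choice of the constants $L$ and $K$, working throughout on the compact range of the data. I assume, as in the setting of \cite{Dutta1}, that $f\in C^1$ and $\rho_0\in BV$, so that $\rho_0$ is bounded with range contained in $[m,M]$. Write the shifted problem as $\tilde\rho=\rho+L$, with flux $\tilde f(a)=f(a-L)+K$.

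\emph{Equivalence.} This step is routine. If $\rho$ is a weak solution of \eqref{csc}, then $\tilde\rho=\rho+L$ satisfies $\tilde\rho_t+\tilde f(\tilde\rho)_x=0$ in the distributional sense, because constants have zero time and space derivatives. The Rankine--Hugoniot speeds are unchanged, since differences of $\tilde f$ and of $\tilde\rho$ equal the corresponding differences of $f$ and $\rho$. Entropy--flux pairs transform the same way, so admissibility is also preserved. The inverse shift gives the converse.

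\emph{Positivity and monotonicity.} For $\rho_0+L>0$ it suffices to take $L>-m$. Let $g(a)=\tilde f(a)/a$ on $I_L=[m+L,M+L]$. Then
\[
g'(a)=\frac{a f'(a-L)-f(a-L)-K}{a^2}.
\]
So we need two inequalities on $I_L$. First, $K>-\min_{[m,M]} f$, which makes $g>0$. Second, $K<\min_{a\in I_L}\bigl(a f'(a-L)-f(a-L)\bigr)$, which makes $g'>0$ and hence $g$ strictly increasing. An admissible $K$ exists exactly when, for every $b\in[m,M]$,
\[
(b+L)\,f'(b) > f(b)-\min_{[m,M]}f .
\]
The right-hand side is at most $\operatorname{osc}_{[m,M]}f$, which does not depend on $L$.

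\emph{Main obstacle: the sign of $f'$.} Since $(ag)'=g+ag'$, the condition ``$f/a$ positive and increasing'' forces $f'>0$ on the range, and adding $K$ does not change $f'$. So the argument needs $f'\ge\delta>0$ on $[m,M]$. When the paper's hypotheses do not already give this, I would first pass to a moving frame $x\mapsto x-ct$. This replaces $f$ by $f+c\rho$ and maps weak and admissible solutions to weak and admissible solutions. Choosing $c>-\min_{[m,M]}f'$ gives $f'\ge\delta>0$ on $[m,M]$. Then choose $L$ so large that $(m+L)\delta>\operatorname{osc}f$. Finally choose $K$ strictly between $-\min f$ and the minimum of $a f'(a-L)-f(a-L)$ over $I_L$; this minimum is attained because $I_L$ is compact and $f\in C^1$.
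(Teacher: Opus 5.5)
The paper gives no proof of Proposition~\ref{one}. It quotes the result from \cite{Dutta1}, so there is no internal argument to compare yours against, and your proof has to stand on its own. It does. The equivalence step (weak formulation and entropy pairs) is correct, as are the formula for $(\tilde f(a)/a)'$, the two-sided window for $K$, and the choice of $L$ with $(m+L)\delta>\mathrm{osc}\,f$.

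The most valuable part is your necessity observation, because it exposes a hypothesis the quoted statement omits. Both $\rho\mapsto\rho+L$ and $f\mapsto f+K$ leave the characteristic speed $f'$ unchanged. On the other hand, a positive, increasing $\tilde f(a)/a$ on an interval of positive $a$ forces $\tilde f'=\tilde f/a+a(\tilde f/a)'\ge \tilde f/a>0$. Hence, for $f\in C^1$, constants $L$ and $K$ as in the proposition exist if and only if $f'>0$ on $[\inf\rho_0,\sup\rho_0]$. For continuous data this interval is the range, and in general entropy solutions can take values throughout it. Burgers' equation with data of both signs is therefore a counterexample to the statement as written. Consistently, the paper itself only treats Burgers with positive data.

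So when $f'$ is not everywhere positive, you are proving a modified statement with a third, Galilean, parameter. That parameter is not part of the ``two-parameter family of correspondences'' in the paper's discussion of affine changes. It is nonetheless the right repair, and it is harmless on the Lagrangian side: on its own it adds $c$ to $F$ and replaces $\gamma$ by $\gamma+ct$.

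One sign slip: with $y=x-ct$ the new flux is $f-c\rho$, not $f+c\rho$. To get $f+c\rho$ you need $y=x+ct$, or equivalently replace $c$ by $-c$.
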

 To avoid introducing additional notation, assume in this paper that \eqref{csc} already has this structure.
Then the functions in \eqref{tcs} are defined from \eqref{csc} as follows:
\begin{definition} \label{two}
Given the Cauchy problem \eqref{csc}, define
\[F(\rho)=f(\rho)/\rho\equiv u;  \rho=F^{-1}(u)\equiv g(u); \textrm{ and } u_0=F(\rho_0).\]
\end{definition}
The following theorem applies.
\begin{theorem}[Dutta \cite{Dutta1}, Theorem 2.2] \label{three}
Given positive $BV$ data $u_0$ and
assuming that $F$ is positive and strictly increasing over the range of the data,
 \eqref{tcs} is a strictly hyperbolic Temple system with one degenerate
characteristic field.
The Cauchy problem \eqref{tcs} has a unique admissible weak solution 
$(\eta,v) \in ({\cal C}(0,\infty), BV^2)$ for $t>0$.
\end{theorem}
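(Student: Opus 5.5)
The system \eqref{tcs} is $U_t+G(U)_x=0$ with $U=(\eta,v)$ and $G(\eta,v)=\bigl(-F(v/\eta),\,0\bigr)$. The plan is to read off strict hyperbolicity and the Temple structure from this flux. Then we decouple the system via the stationary variable $w=v/\eta$, and obtain existence and uniqueness from the scalar theory applied to \eqref{csc} together with an explicit reconstruction of $\eta$.

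\emph{Structure.} The Jacobian $DG$ has second row zero. Its first row is $\bigl(F'(w)\,v/\eta^2,\,-F'(w)/\eta\bigr)$ with $w=v/\eta$, so the eigenvalues are $\lambda_1=0$ and $\lambda_2=F'(w)w/\eta$. Positivity of $u_0$ and strict monotonicity of $F$ on the range of the data make $\lambda_2\neq0$ there, which gives strict hyperbolicity. The eigenvalue $\lambda_1=0$ is constant, so its field is linearly degenerate. Riemann invariants are $v$ and $w=v/\eta$. Their level sets are straight lines, $v=\text{const}$ and $\eta=v/w$ (rays through the origin), and this is the defining Temple property. We conclude that \eqref{tcs} is a Temple system with one degenerate field.

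\emph{Existence.} Since $v_t=0$, we have $v\equiv u_0\in BV$ for all $t$. Dividing, $w=v/\eta$ formally satisfies a scalar law. The natural link to \eqref{csc} is to set $\rho=g(w)$, so that $w=F(\rho)$, and to use $\eta$ as a Lagrangian Jacobian $\eta=\partial_x\gamma$, where $\gamma_t=-u\circ\gamma$ or its appropriate sign variant. We take the unique Kružkov entropy solution $\rho\in BV$ of \eqref{csc}, built from $\rho_0=g(u_0)$, and define the flow $\gamma$ of the velocity $F(\rho)=f(\rho)/\rho$. This velocity is the particle velocity of the conservation law, and Filippov flows of it are well defined and bi-Lipschitz because $\rho>0$ and $\rho$ is BV. We then set $\eta=\gamma_x$, or its mass-weighted version $\eta=\rho_0/\rho(\gamma,t)$, and verify by the weak formulation that $\eta_t=F(v/\eta)_x$ with $\eta(\cdot,0)=1$. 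Continuity in time, $\eta\in\mathcal C(0,\infty)$, follows from finite propagation speed and the BV bound, which gives $L^1_{\rm loc}$ Lipschitz continuity in $t$. As an alternative route, Temple systems with BV data admit global solutions by front tracking or Glimm, because the Riemann-invariant total variation does not increase. We would invoke this as a backup if the explicit reconstruction becomes delicate.

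\emph{Admissibility and uniqueness.} We define admissibility for \eqref{tcs} as Lax/Liu conditions in the genuinely nonlinear field, or equivalently entropy inequalities induced from \eqref{csc}. We then show that the correspondence $(\eta,v)\leftrightarrow\rho$ maps admissible solutions to Kružkov entropy solutions and back. Uniqueness then transfers from Kružkov uniqueness for \eqref{csc}: $v$ is determined trivially, and $\eta$ is determined by $w$ and $v$.

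The \textbf{main obstacle} is this weak-solution correspondence across shocks. In a Lagrangian change of variables, discontinuities in $\rho$ become discontinuities in $\eta$ along stationary curves in material coordinates. We must check that the Rankine--Hugoniot relations and entropy inequalities transform correctly. First we would verify this for a single shock via the Riemann problem. Then we would extend to BV data by front-tracking approximation and $L^1$ stability of both formulations.
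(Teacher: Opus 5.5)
Your structural computation is correct: eigenvalues $0$ and $F'(w)w/\eta$ with eigenvectors $(\eta,v)$ and $(1,0)$, and Riemann invariants $v/\eta$ and $v$ with straight-line level sets. One small point: $\lambda_2\neq 0$ needs $F'>0$, which strict monotonicity alone does not give. The paper contains no proof of this theorem; it is imported from \cite{Dutta1}, and the paper's logic runs from \eqref{tcs} to $\gamma$ to $\rho$ via Theorem \ref{four}. You reverse that direction, which is in principle a legitimate Wagner-type Eulerian/Lagrangian equivalence. As written, however, the existence step uses the wrong scalar problem and the uniqueness step is circular.

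\emph{Existence.} Your identification $w=F(\rho)$, with the scalar problem started from $\rho_0=g(u_0)$, is off by one composition with $F$. In \eqref{tcs} the particle velocity is $\gamma_t=F(v/\eta)$ and $\gamma_x=\eta$, so $w=v/\eta$ is the Lagrangian \emph{density} $\rho\circ\gamma$, not the velocity. Suppose $\gamma$ is the flow of $F(\rho)$ for the entropy solution with $\rho(\cdot,0)=g(u_0)$. Mass conservation gives $\gamma_x\,(\rho\circ\gamma)=g(u_0)$, so $\eta=\gamma_x$ satisfies $\eta_t=(F(\rho)\circ\gamma)_x=F\bigl(g(v)/\eta\bigr)_x$, not $F(v/\eta)_x$. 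Already at $t=0$ you get $\eta_t=(u_0)_x$ where \eqref{tcs} requires $F(u_0)_x$. Your ``mass-weighted'' $\eta=\rho_0/\rho(\gamma,t)$ is the same function and fails the same way. The paper's normalization is loose here (Theorem \ref{main} effectively uses $v=g(u_0)=\rho_0$), but in any normalization $v/\eta$ must be the density. The scalar problem to solve is therefore $\rho_t+(\rho F(\rho))_x=0$ with initial data $v(\cdot,0)$ itself.

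Separately, ``$\rho>0$ and BV'' does not by itself make a Filippov flow unique or bi-Lipschitz. Define $\gamma$ instead through the mass potential $\Phi$, with $\Phi_y=\rho$ and $\Phi_t=-\rho F(\rho)$, by $\Phi(\gamma(x,t),t)=\Phi(x,0)$. This $\gamma$ is Lipschitz with $\gamma_x=v/(\rho\circ\gamma)$ and $\gamma_t=F(\rho)\circ\gamma$ a.e. Then $\eta_t=F(v/\eta)_x$ follows from $\gamma_{xt}=\gamma_{tx}$ in the sense of distributions.

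\emph{Uniqueness.} The claim ``$\eta$ is determined by $w$ and $v$'' is circular, since $w=\rho\circ\gamma$ and $\gamma$ is built from $\eta$. You need that for \emph{any} admissible solution $(\eta,v)$, the $\gamma$ of Theorem \ref{four} satisfies $\Phi(\gamma(x,t),t)=\Phi(x,0)$ for the potential of the unique Kru\v{z}kov solution. That pins down $\gamma$, and hence $\eta=\gamma_x$.

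The step you flag as the main obstacle does not need Riemann problems or front tracking. Appealing to $L^1$ stability of the Temple formulation there would presuppose much of what you want to prove. Under the bi-Lipschitz change of variables $y=\gamma(x,t)$, an entropy pair $(\mathcal E,\mathcal Q)$ of the scalar law becomes $\bigl(\eta\,\mathcal E(v/\eta),\ \mathcal Q(v/\eta)-F(v/\eta)\,\mathcal E(v/\eta)\bigr)$ for \eqref{tcs}. Entropy inequalities transfer in both directions because $\gamma_x>0$. Applied to $(1,0)$ and $(\rho,\rho F(\rho))$, this transformation returns exactly the two equations of \eqref{tcs}, which also confirms the corrected identification above.
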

Finally, $\gamma$ is recovered from the solution of \eqref{tcs} by means of a theorem in \cite{HKT}.
\begin{theorem}[Holmes, Keyfitz, Tiglay \cite{HKT}, Theorem 4] \label{four} \label{five}
For functions\\
\noindent $(\eta,v) \in ({\cal C}(0,\infty), BV^2)$,
the distributional solution $\gamma$ to the equations
$$ \gamma_x(x,t) = \eta(x,t)\,, \quad \gamma_t(x,t)=F\left(\frac{v(x,t)}{\eta(x,t)}\right)\,,
\quad \gamma(x,0)=x$$
is well defined, absolutely continuous, and invertible, and its inverse is absolutely continuous.
Define 
$$ \rho \equiv \frac{v(\gamma^{-1}(x,t),t)}{\eta(\gamma^{-1}(x,t),t)}\,.$$
Then $\rho$ is an admissible weak solution to the Cauchy problem \eqref{csc}.
\end{theorem}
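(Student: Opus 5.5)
The plan is to prove Theorem \ref{four} in two stages. First, construct $\gamma$ from the pair $(\eta,v)$ and establish its regularity and invertibility. Second, verify the weak formulation of \eqref{csc} for $\rho$ by changing variables from Lagrangian coordinates $(y,t)$ to Eulerian coordinates $(x,t)=(\gamma(y,t),t)$.

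\textbf{Construction of $\gamma$.} For well-definedness, note that the first equation of \eqref{tcs} reads $\eta_t=\bigl(F(v/\eta)\bigr)_x$ in the sense of distributions. This is exactly the compatibility condition $(\gamma_x)_t=(\gamma_t)_x$ for the closed one-form $\eta\,dx+F(v/\eta)\,dt$. I would therefore define
$$\gamma(x,t)=x+\int_0^t F\bigl(v(0,s)/\eta(0,s)\bigr)\,ds+\int_0^x \eta(y,t)\,dy .$$
If the trace at $x=0$ is awkward, I would instead mollify, use Poincar\'e's lemma, and pass to the limit. Since $\eta\in BV$ and $\eta$ is bounded above and below by positive constants (this follows from the invariant-region property of the Temple system and $\eta(x,0)=1$), $\gamma(\cdot,t)$ is Lipschitz and strictly increasing with $\gamma_x\ge c>0$. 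Hence $\gamma(\cdot,t)$ is bi-Lipschitz, so both it and its inverse are absolutely continuous. In $t$, $\gamma$ is Lipschitz because $F(v/\eta)$ is bounded. On $M=\mathbb S^1$ one must also check that $\int\eta\,dx$ is preserved, which follows from integrating the conservation law over $x$.

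\textbf{Weak formulation for $\rho$.} Take a test function $\phi(x,t)$ and set $\psi(y,t)=\phi(\gamma(y,t),t)$, which is Lipschitz with compact support. Since $v_t=0$, we have $v(y,t)=u_0(y)$, and $\rho(\gamma(y,t),t)\,\eta(y,t)=v(y,t)$. The change of variables $dx=\eta\,dy$ then gives
$$\iint \rho\,\phi_t+f(\rho)\,\phi_x\,dx\,dt=\iint v\,\phi_t(\gamma,t)+f(\rho)\,\eta\,\phi_x(\gamma,t)\,dy\,dt .$$
Now use $\psi_t=\phi_t+\gamma_t\phi_x$ and $\gamma_t=F(v/\eta)$. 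The equation $\gamma_t=F(v/\eta)$ is only consistent with Definition \ref{two} if we read $v/\eta$ as $\rho$ itself, i.e.\ $\gamma_t=F(\rho)=f(\rho)/\rho$. With that reading, $v\gamma_t=\rho\eta\cdot f(\rho)/\rho=f(\rho)\eta$, and so
$$v\,\psi_t=v\,\phi_t+f(\rho)\,\eta\,\phi_x .$$
The integral above therefore equals $\iint v\,\psi_t\,dy\,dt$. Because $v_t=0$, this integral equals $-\int u_0(y)\,\psi(y,0)\,dy$ (up to the notational identification of $v$ with the Lagrangian density). That is precisely $-\int\rho_0\phi(x,0)\,dx$, since $\gamma(\cdot,0)=\mathrm{id}$. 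This yields the weak form of \eqref{csc}. I will need to fix the notation carefully at the start so that $v/\eta$ and $\rho$ correspond consistently.

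\textbf{Admissibility (main obstacle).} Here I would show that Lax/Oleinik shocks in $\rho$ correspond to admissible shocks of the $\eta$-family in \eqref{tcs}, the genuinely nonlinear field; the $v$-field is a linearly degenerate contact. For entropy pairs, I would transport a Kru\v{z}kov entropy pair $(|\rho-k|,\ \mathrm{sgn}(\rho-k)(f(\rho)-f(k)))$ through the same change of variables. This gives an entropy pair for \eqref{tcs} of the form $\eta\,|v/\eta-k|$ with an appropriate flux. The inequality is preserved because the Jacobian $\eta>0$ and $\psi\ge0$ whenever $\phi\ge0$.

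The delicate points are two. The change of variables must be justified for BV functions under a bi-Lipschitz map; the area formula suffices for this. And the Lagrangian entropy pair must be shown to be admissible for the Temple system in the sense used in Theorem \ref{three}. I would first check this at a single shock via Rankine--Hugoniot, then extend by front tracking approximations.
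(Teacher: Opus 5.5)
Your proof is correct in substance, apart from a slip noted below, but it takes a different route from the one the paper indicates.

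\textbf{Comparison of routes.} The paper does not prove Theorem \ref{four} itself. It cites \cite{HKT} and says only that the main tool is the Radon--Nikodym theorem. That suggests the absolute continuity of $\gamma(\cdot,t)$ and of its inverse, and the substitution $dx=\eta\,dy$, are obtained there measure-theoretically, with $\eta$ as a density. Such an argument needs only $\eta>0$ a.e.\ and gives exactly the absolute continuity stated.

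You instead take quantitative bounds from the invariant rectangles of the Temple system. Its Riemann invariants are $v$ and $v/\eta$, so $\inf\rho_0/\sup\rho_0\le\eta\le\sup\rho_0/\inf\rho_0$. This makes $\gamma(\cdot,t)$ bi-Lipschitz and the change of variables elementary.

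What your route buys is the stronger bi-Lipschitz conclusion, which the paper uses elsewhere. What it costs is that $\rho_0$ must be bounded away from zero (Proposition \ref{one} supplies this), and that you rely on a property of the particular admissible solution of \eqref{tcs} rather than of an arbitrary pair in the stated class.

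The weak-form identity is the same on either route: $(\rho\circ\gamma)\,\eta=v$, $v\gamma_t=f(\rho\circ\gamma)\,\eta$, and $v_t=0$. Your reading $v(\cdot,0)=\rho_0$ rather than $u_0$ is the right one. It is what Theorem \ref{main} uses, where $v=\sqrt{b'(u_0)}=g(u_0)=\rho_0$.

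\textbf{Two corrections.} First, your explicit formula for $\gamma$ has a spurious leading $x$. As written, $\gamma(x,0)=2x$ and $\gamma_x=1+\eta$. Drop it, so that $\gamma(x,t)=\int_0^tF\bigl(v(0,s)/\eta(0,s)\bigr)\,ds+\int_0^x\eta(y,t)\,dy$.

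Second, the admissibility step needs no shock-by-shock check. Such a check would in any case need Oleinik's chord condition, because the $\eta$-field is genuinely nonlinear only where $f''\neq0$. For any Eulerian entropy pair $(\Phi,q)$, your change of variables gives $\iint\bigl(\Phi(\rho)\phi_t+q(\rho)\phi_x\bigr)\,dx\,dt=\iint(E\psi_t+Q\psi_y)\,dy\,dt$. Here $E=\eta\,\Phi(v/\eta)$ and $Q=q(v/\eta)-\Phi(v/\eta)F(v/\eta)$, and one checks that $(E,Q)$ is an entropy pair for \eqref{tcs}.

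Moreover $E$ is the perspective of $\Phi$, hence convex whenever $\Phi$ is. For Kru\v{z}kov's entropy it is $E=|v-k\eta|$ with $Q=k\,\mathrm{sgn}(v-k\eta)\bigl(F(v/\eta)-F(k)\bigr)$. So if admissibility in Theorem \ref{three} means the convex-entropy inequality, Kru\v{z}kov admissibility of $\rho$ follows directly, using $\phi\ge0\Leftrightarrow\psi\ge0$ as you say.
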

The main tool in the proof is the Radon-Nikodym theorem.

The properties of $\gamma$ were established in \cite {HKT}.
Note that  $\rho = g(u)$ and $u= \gamma_t\circ\gamma^{-1}$;
formally, we also have $\gamma_t=u\circ\gamma$. 
%%%%%%%
\begin{remark}
The relation $u=\gamma_t\circ \gamma^{-1}$ determines an element $u$ of a Lie algebra
from an element $\gamma$ in a Lie group, in this case ${\cal D}(M)$.
As noted above, our spaces and the connections between them do not satisfy the
smoothness requirements of Lie algebras and groups.
\end{remark}

%%%%%%%
Given  the initial-value problem \eqref{tcs} with positive data $u_0$, a unique
weak global solution $(\eta,v)$ is defined for all $t>0$; the bi-Lipschitz homeomorphism $\gamma$ is
similarly uniquely defined; and the correspondence
 $\gamma \mapsto \rho$ defines a unique conservation law \eqref{csc} with $f(\rho)=\rho F(\rho)$.
A forward construction, which defines $\gamma$ from $\rho$ by
$\gamma_t=u(\gamma,t)$, can be carried out directly
only for classical solutions to \eqref{csc} when the initial
data are at least Lipschitz.

\begin{remark}
The definition of $F$ allows one
to write \eqref{csc} as a transport equation $\rho_t+(u\rho)_x=0$, or $g(u)_t+\big(ug(u)\big)_x=0$.
With this as the underlying idea, $u$ is the (Eulerian) velocity of a material with density $\rho$,
and $\gamma(x,t)$ is the (Lagrangian) path of a particle with initial position $x$.
In mechanics, it would be unusual for velocity to be a function of density.
Scalar conservation laws, for the most part, do not describe any physical situation.
Intuitively, however, one may think of the Lighthill-Whitham-Richards model \cite{Whi}
for uni-directional traffic flow, $\rho_t + \big(\rho F(\rho)\big)_x=0$, where   
$u=F(\rho)$ is the velocity of traffic with density $\rho$. 
In a simplified version of the traffic flow equation, $u=1-\rho$ is an affine function of
the density, and hence is also conserved, even for weak solutions.
The simplified equation is also equivalent to Burgers' equation.
\end{remark}

Theorem \ref{four}
extends a correspondence between classical solutions of 
scalar conservation laws and diffeomorphisms to a
correspondence between weak solutions and weak diffeomorphisms.

The short section 3.4 of \cite{CoKo} 
considers the possibility of finding diffeomorphisms for times beyond the breakdown time of
$u_t+3uu_x=0$ for given Cauchy data, and concludes that this is impossible.
The current paper provides an explanation: classical solutions of one equation correspond to classical
solutions of the other; smooth solutions break down simultaneously in both systems, and are replaced by
weak solutions.
The counterexample in \cite{CoKo} fails in part 
because the correct conservation relation for $u_t+3uu_x=0$,
{\em if one is interested in the diffeomorphism that is a geodesic under the $L^2$ Riemannian metric\/} is
equation \eqref{correct}, which requires positive data.
See also the discussion at the end of Section \ref{A}.

Returning to the general case,
the objective of this paper is to show the existence of a unique action functional,
defined in equation \eqref{action} below, whose extremals lead to \eqref{tcs}.

Every Cauchy problem for a scalar conservation law (in the form
described in Proposition \ref{one}) admits a weak diffeomorphism formulation that is an extremal of an action,
and this correspondence is unique.
Conversely, every positive definite action on the space of diffeomorphisms gives rise to a 
system
of the form \eqref{tcs} and hence to a unique conservation law via
Theorem \ref{four}.
For example, 
the weak diffeomorphism for Burgers' equation, $\rho_t+ (\rho^2/2)_x=0$ with positive data
is the extremal for a cubic action potential.

To state the main result of this paper,
we define the action functional $\b$ on ${\cal D}^2(M)$, following \cite{CoKo}, by
\begin{equation} \label{action}
 \b(\gamma) = \int_0^c\int_M b\big(\gamma_t\circ\gamma^{-1}\big)\,dx\,dt\,,
\end{equation}
where $b$ is a smooth, positive, convex or concave function.
%%%%%%%%%%%%

\begin{theorem} \label{main}
If $\gamma(x,t)$ is a ${\cal C}^2$ extremal of $\b$, 
with given data $\gamma(x,0)=x$ and $\gamma_t(x,0)=u_0(x) \in {\cal C}^1$, then $\gamma$ satisfies the 
Hamilton-Jacobi equation
\begin{equation*} 
b'(\gamma_t)= \frac{b'(u_0)}{\gamma_x^2}\,,
\end{equation*}
or 
\begin{equation} \label{Beqn}
\gamma_t= B\left( \frac{b'(u_0)}{\gamma_x^2}\right)= F\left(\frac{\sqrt{b'(u_0)}}{\gamma_x}\right)\,,\quad
\gamma(x,0)=x\,.
\end{equation} 
where $B=(b')^{-1}$ and  $F= B\circ Q$; $Q$ is the function $\xi\mapsto \xi^2$.
With $\eta \equiv \gamma_x$ and $v\equiv \sqrt{b'(u_0)}$, this becomes the Temple system \eqref{tcs},
which has a weak solution for all $t>0$, and defines a bi-Lipschitz homeomorphism $\gamma$.
The weak diffeomorphism $\gamma$, in turn, corresponds uniquely to a weak solution $u$ defined by
$u\equiv \gamma_t\circ \gamma^{-1}$ of the scalar conservation law
$$\big(g(u)\big)_t+ \big(ug(u)\big)_x=0\,,$$
with $g\equiv F^{-1}$, and hence to the scalar conservation law \eqref{csc} with $\rho = g(u)$ and
$f(\rho) =\rho F(\rho)$.
\end{theorem}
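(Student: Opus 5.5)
The plan is to compute the first variation of $\b$ after moving to Lagrangian coordinates, integrate the resulting Euler--Lagrange equation once in time, and then hand off to Theorems \ref{three} and \ref{four}.

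\emph{Step 1: Lagrangian form of the action.} For a ${\cal C}^2$ diffeomorphism $\gamma(\cdot,t)$, substitute $y=\gamma(x,t)$ in the inner integral of \eqref{action}. Since $dy=\gamma_x\,dx$ with $\gamma_x>0$, this gives
\[
\b(\gamma)=\int_0^c\int_M b(\gamma_t(x,t))\,\gamma_x(x,t)\,dx\,dt .
\]
So $\b$ is the action of the local Lagrangian $L(\gamma_t,\gamma_x)=b(\gamma_t)\gamma_x$, and this Lagrangian does not depend on $\gamma$ itself.

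\emph{Step 2: first variation.} Take variations $\gamma+\epsilon\phi$ with $\phi$ compactly supported in $M\times(0,c)$, or periodic in $x$ when $M=\mathbb S^1$. Integrating by parts gives the Euler--Lagrange equation
\[
\big(b'(\gamma_t)\gamma_x\big)_t+\big(b(\gamma_t)\big)_x=0 .
\]
Expanding with the chain rule, this reads $b''(\gamma_t)\gamma_{tt}\gamma_x+2b'(\gamma_t)\gamma_{xt}=0$.

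\emph{Step 3: first integral.} Multiply the expanded equation by $\gamma_x$. The result is exactly
\[
\partial_t\big(b'(\gamma_t)\gamma_x^2\big)=b''(\gamma_t)\gamma_{tt}\gamma_x^2+2b'(\gamma_t)\gamma_x\gamma_{xt}=0 .
\]
Hence $b'(\gamma_t)\gamma_x^2$ is independent of $t$. At $t=0$ we have $\gamma_x=1$ and $\gamma_t=u_0$, so it equals $b'(u_0)$, which is the Hamilton--Jacobi equation.

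\emph{Step 4: inversion.} Since $b$ is strictly convex or strictly concave, $b'$ is strictly monotone and $B=(b')^{-1}$ exists. This yields $\gamma_t=B\big(b'(u_0)/\gamma_x^2\big)$. To write this as $F(v/\gamma_x)$ with $v=\sqrt{b'(u_0)}$ and $F=B\circ Q$, I need $b'(u_0)>0$. I would impose this as a hypothesis on the range of the data, in the same spirit as the normalization of Proposition \ref{one}; it is the analogue of the sign requirement behind \eqref{correct}. Then:
\begin{itemize}
\item Setting $\eta=\gamma_x$ and differentiating \eqref{Beqn} in $x$ gives $\eta_t=F(v/\eta)_x$.
\item The variable $v$ trivially satisfies $v_t=0$.
\item The initial data are $\eta(x,0)=1$ and $v(x,0)=\sqrt{b'(u_0)}$.
\end{itemize}
This is system \eqref{tcs} with initial data $\sqrt{b'(u_0)}$ in place of $u_0$.

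\emph{Step 5: weak solutions.} I then check the hypotheses of Theorem \ref{three}: $F$ must be positive and strictly monotone on the relevant range.
\begin{itemize}
\item $Q$ is increasing on $(0,\infty)$.
\item $B$ is increasing when $b$ is convex, so $F$ is then increasing.
\item When $b$ is concave, $F$ is decreasing. In that case I would reduce to the increasing case by an affine change as in Proposition \ref{one}, or observe that strict hyperbolicity and the Temple structure only need strict monotonicity.
\end{itemize}
Theorem \ref{three} then gives a global weak solution $(\eta,v)$. Theorem \ref{four} gives the bi-Lipschitz $\gamma$ and the admissible weak solution $\rho$ of \eqref{csc} with $f(\rho)=\rho F(\rho)$. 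Finally, $u=F(\rho)=\gamma_t\circ\gamma^{-1}$ and $g=F^{-1}$ give $g(u)_t+(ug(u))_x=0$.

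I expect the main obstacles to be bookkeeping rather than the calculation:
\begin{itemize}
\item justifying the change of variables and the boundary terms in the variation (compact support or periodicity, and $\gamma_x>0$ for a ${\cal C}^2$ diffeomorphism);
\item securing the sign $b'(u_0)>0$ and the monotonicity of $F$ in the concave case, both needed to put the system in the form required by Theorem \ref{three}.
\end{itemize}
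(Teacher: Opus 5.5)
Your proposal is correct and follows the paper's proof essentially step for step: the change of variables $y=\gamma(x,t)$ giving $\int_0^c\int_M b(\gamma_t)\gamma_x\,dx\,dt$, the Euler--Lagrange equation $\gamma_x b''(\gamma_t)\gamma_{tt}+2b'(\gamma_t)\gamma_{xt}=0$, the first integral $b'(\gamma_t)\gamma_x^2=b'(u_0)$, inversion through $B=(b')^{-1}$ with $F=B\circ Q$, and the hand-off to Theorems~\ref{three} and~\ref{four}. The differences are minor and in your favour. Multiplying by $\gamma_x$, instead of dividing by $b'(\gamma_t)\gamma_x$ and taking a logarithm as the paper does, needs no sign condition on $b'$; and you state explicitly the requirements $b'(u_0)>0$ and strict monotonicity of $F$, which the paper leaves unstated (for concave $b$, however, an affine change as in Proposition~\ref{one} would alter $F$ and hence $\gamma$, so the cleaner reduction is $\gamma(x,t)\mapsto -\gamma(-x,t)+Kt$, which replaces $F$ by $K-F$, increasing and, for large $K$, positive).
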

%%%%%%%%%%%

The converse also holds. 
\begin{corollary} \label{coro}
Given the Cauchy problem \eqref{csc} with a flux $f$ satisfying the conditions in Proposition \ref{one},
 the diffeomorphism $\gamma$ constructed from \eqref{tcs} using Theorem
\ref{four} is an extremal of the action $\b$ with $b$ defined by $b'=B^{-1}$ where
 $B\circ Q=F$.
\end{corollary}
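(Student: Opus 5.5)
The plan is to run the argument of Theorem \ref{main} backwards. There are three steps: build $b$ from $F$, check that the Cauchy data match, and show that the $\gamma$ of Theorem \ref{four} satisfies the Euler--Lagrange equation of $\b$.

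\textbf{Step 1: construct $b$.} By Proposition \ref{one}, $F=f(\rho)/\rho$ is positive and strictly increasing on the range of $\rho_0>0$. Hence $B\equiv F\circ Q^{-1}$, that is $B(s)=F(\sqrt{s})$, is strictly increasing on the relevant range of $s$, and it is invertible there. Set $b'=B^{-1}$ and integrate. Since $b'$ is increasing, $b$ is convex. Adding a constant makes $b$ positive on the bounded range of $u$, and constants do not change extremals. Uniqueness of $b$ up to this additive constant is immediate, because $F$ determines $B$ and $B$ determines $b'$.

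\textbf{Step 2: match the Cauchy data.} With $g=F^{-1}$ we get $b'(u)=B^{-1}(u)=\big(g(u)\big)^2$. Therefore $\sqrt{b'(u_0)}=g(u_0)=\rho_0$, using positivity of $\rho_0$. So the quantity $v=\sqrt{b'(u_0)}$ in Theorem \ref{main} is exactly the datum $v(x,0)=\rho_0$ of \eqref{tcs}. This is what Theorem \ref{four} requires, since $\rho=v/\eta$ and $\eta(x,0)=1$. It also gives $\gamma_t(x,0)=F(\rho_0)=u_0$. Consequently \eqref{Beqn} and the system \eqref{tcs} built from \eqref{csc} in Definition \ref{two} are literally the same problem.

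\textbf{Step 3: verify the Euler--Lagrange equation.} Substitute $y=\gamma(x,t)$ in \eqref{action} to write
\[
\b(\gamma)=\int_0^c\int_M b(\gamma_t)\,\gamma_x\,dx\,dt .
\]
The Euler--Lagrange equation is
\[
\partial_t\big(b'(\gamma_t)\gamma_x\big)+\partial_x\big(b(\gamma_t)\big)=0 .
\]
For ${\cal C}^2$ maps this equals $\gamma_x^{-1}\,\partial_t\big(b'(\gamma_t)\gamma_x^2\big)=0$. Now let $\gamma$ be the map obtained from \eqref{tcs} via Theorem \ref{four}. Then
\[
b'(\gamma_t)\gamma_x^2=g\big(F(v/\eta)\big)^2\eta^2=v^2=\rho_0^2 ,
\]
which is independent of $t$, because $v_t=0$. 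This is the Hamilton--Jacobi relation of Theorem \ref{main}. Hence on every region where the solution is classical, $\gamma$ satisfies the Euler--Lagrange equation, i.e.\ it is an extremal.

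\textbf{Main obstacle: weak solutions.} Once shocks form, $\gamma$ is only bi-Lipschitz, and the expansion of the Euler--Lagrange operator is no longer valid. I would handle this in two ways.

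\begin{itemize}
\item \emph{Extremality in the weak sense.} Define extremality for weak $\gamma$ as the weak (conservation) form of the Hamilton--Jacobi equation, $\gamma_t=F\big(\sqrt{b'(u_0)}/\gamma_x\big)$, which is precisely \eqref{tcs}. Then the Hamilton--Jacobi relation $b'(\gamma_t)\gamma_x^2=v^2$ holds a.e.\ directly from the pointwise identity above, since $v$ is time-independent. Combined with the uniqueness of admissible solutions in Theorem \ref{three}, this identifies $\gamma$ as the extremal of $\b$ with these data.
\item \emph{Checking the divergence form across discontinuities.} I would check whether $\partial_t(v^2/\eta)+\partial_x\, b\big(F(v/\eta)\big)=0$ holds distributionally, using Rankine--Hugoniot relations across the discontinuities of the Temple system \eqref{tcs}. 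The $v$-discontinuities are stationary contacts, and the $\eta$-shocks carry $v$ continuous. I expect this to hold only up to the choice of conservation form, echoing the remark around \eqref{correct}. So I would state the corollary with the Hamilton--Jacobi/Temple-system notion of extremal, and treat the divergence-form check as supplementary.
\end{itemize}
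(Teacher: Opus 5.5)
Your proof is correct at the paper's level of rigor, since the paper's own argument is also purely classical. It runs in the reverse direction from the paper's proof.

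\textbf{How the two routes differ.} The paper moves the Euler--Lagrange equation \eqref{ahaeqn} to Eulerian variables via $u=\gamma_t\circ\gamma^{-1}$, obtaining $u_t+\big(u+2b'(u)/b''(u)\big)u_x=0$. It then matches this with the transport form $u_t+\big(u+g(u)/g'(u)\big)u_x=0$ of $g(u)_t+\big(ug(u)\big)_x=0$. The matching forces $b''/(2b')=g'/g$, i.e.\ $b'=g^2$, which is $B^{-1}$. You instead fix $b'=B^{-1}=g^2$ at the outset and write the Euler--Lagrange operator as $\gamma_x^{-1}\partial_t\big(b'(\gamma_t)\gamma_x^2\big)$. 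You then check directly from $\gamma_x=\eta$, $\gamma_t=F(v/\eta)$ and $v_t=0$ that $b'(\gamma_t)\gamma_x^2=v^2$ is independent of $t$.

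\textbf{What each buys.} The paper's version is a derivation. It explains where $b$ comes from and shows that $b$ is forced only up to additive \emph{and} multiplicative constants. So your uniqueness remark only concerns the normalization $b'=B^{-1}$: any $b'=Cg^2$ has the same extremals. It also ties the action to Eulerian equations such as $u_t+3uu_x=0$. Yours is a sufficiency check that needs neither $\gamma^{-1}$ nor derivatives of $u$. It makes the matching of Cauchy data explicit. You rightly read the datum of \eqref{tcs} as $v(x,0)=\rho_0$, which Theorems \ref{main} and \ref{four} require, rather than the printed $u_0$. It also gives $b'(\gamma_t)\gamma_x^2=v^2$ almost everywhere even for the weak $\gamma$.

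\textbf{The weak case.} The suspicion in your last bullet is correct. Across an $\eta$-shock, the distributional Euler--Lagrange equation $\partial_t(v^2/\eta)+\partial_x b\big(F(v/\eta)\big)=0$ is a different conservation form of the same scalar equation. In general it gives a different shock speed. For Burgers, with $w=1/\eta$, the two speeds are $\tfrac v2 w_lw_r$ versus $\tfrac v6(w_l^2+w_lw_r+w_r^2)$, which differ by $\tfrac v6(w_l-w_r)^2$. The stationary $v$-contacts cause no trouble, because $v/\eta$ is continuous across them. So after shocks form, $\gamma$ is not a first-variation critical point of $\b$. Both your argument and the paper's establish the corollary only classically, or in your a.e.\ Hamilton--Jacobi sense.
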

In particular, given a Cauchy problem for Burgers' equation with flux $f(\rho)=\rho^2/2$ and positive
data, $F=\rho/2$, $B(\xi)=\sqrt{\xi}$, and $b'=B^{-1}$; so $b'(u) = u^2$ and 
the corresponding action potential is defined by $b(y) = y^3$ (up to inessential additive and
multiplicative constants).

The rest of the paper is organized as follows. 
Section \ref{A} proves Theorem \ref{main}.
Section \ref{systs} gives some examples of systems that can also be cast in the framework of
geodesics.
We conclude this section with some comments.

\subsubsection{The Space of Weak Diffeomorphisms} \label{examp}
In \cite{Len2}, Lenells defines a space $M^{AC}$ which plays a role similar to the underlying space of
weak diffeomorphisms considered in this paper. Lenells' idea is to extend geodesics derived for the Hunter-Saxton equation to peakon solutions.
He works in the $\dot H$ (homogeneous $H^1$) metric, whereas the choice of $b(u) = \frac{u^2}{2}$ discussed after the proof of Corollary \ref{coro} corresponds to the $L^2$ metric.

As established in \cite{Dutta1} and \cite{HKT}, a `weak diffeomorphism' that provides a Lagrangian description of a given scalar conservation law in one space variable is an absolutely continuous and invertible function whose inverse is also absolutely continuous (this holds for certain systems as well). Since it is well-known (\cite{GiLe}, Theorem 17.15) that an absolutely continuous function is equal almost everywhere to a function in $W^{1,1}$, it follows that the space of weak diffeomorphisms under consideration here is a subset of $W^{1,1}$.

%%%%%%%%%%
\subsubsection{Affine Changes in the Conservation Law} 

Replacing bounded data by positive data was our first step in 
deriving a correspondence between the ``Eulerian'' and  ``Lagrangian'' forms of the Cauchy problem.
Without this preparation, the corresponding Temple system is not well-posed.
The affine changes of variables  in Proposition \ref{one} do not significantly change the nature of
the conservation law, as they amount merely to translations of the variable
and of the flux.
However, the relation between affine changes in the conservation law and 
changes in the 
diffeomorphism is more complicated.
Neither $L$ nor $K$ is uniquely determined, and the diffeomorphism equations corresponding to
different admissible choices result in different well-posed conservation law systems for $\eta$ and $v$
(or Hamilton-Jacobi equations for $\gamma$).

For the Temple system \eqref{tcs} to have global solutions, $v$ must
be non-zero, and that motivates choosing the parameter $L$ in Proposition \ref{one}.
The requirement that $F$ in Definition \ref{two} be invertible, so that the original
conservation law can be recovered from $u$ (that is, from $\gamma$), motivates the
choice of $K$ in that proposition.
Thus a scalar conservation law, \eqref{csc}, gives rise to a two-parameter family of correspondences,
between 
$$ \big(\rho+L\big)_t + \big(f(\rho+L) +K\big)_x=0\,,$$
(where the parameter $K$ enters only in the choice of $F$) and the system \eqref{tcs}
that defines $\gamma$. In this correspondence, $F$ and $u_0$ depend on the choices of $K$ and $L$.

The viewpoint in \cite{Dutta1} and \cite{HKT}, adopted here, is to regard
the conservation law \eqref{csc} as the fundamental object and the Temple system \eqref{tcs} as a device
for determining the corresponding diffeomorphism.
However, one can proceed in the other direction.
We state this as a proposition.
\begin{proposition}
Given a  system of the form \eqref{tcs} with $u_0>0$ and bounded, and $0<F(\cdot)\in C^2$ 
strictly increasing on the range of $u_0$, the corresponding scalar conservation law Cauchy problem
is 
$$ \rho_t + f(\rho)_x=0\,,\quad \rho(x,t) = \rho_0(x)\,, $$
with $\rho$ defined in Theorem \ref{four}, $f(\rho) = \rho F(\rho)$ and $\rho_0 = F^{-1}(u_0)$.
\end{proposition}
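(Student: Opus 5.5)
The plan is to verify the hypotheses of Theorem \ref{three} and Theorem \ref{four} for this system, and then to run the construction of those theorems in reverse. Set $g\equiv F^{-1}$. This is well defined and $C^2$ on the range of $u_0$, because $F$ is $C^2$ and strictly increasing there. Set $\rho_0 \equiv g(u_0)$ and $f(\rho)\equiv \rho F(\rho)$. I need two things: first, that this pair $(f,\rho_0)$ already has the normalized structure of Proposition \ref{one}; second, that the functions produced from it by Definition \ref{two} are exactly the given $F$ and $u_0$.

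The second point is immediate. We have $f(a)/a = F(a)$, so the $F$ of Definition \ref{two} is the given one, and $F(\rho_0)=F(g(u_0))=u_0$.

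The first point needs more care. The quotient $f(a)/a=F(a)$ is positive and strictly increasing, as required. The problem is that Proposition \ref{one} also asks for $\rho_0>0$, and $g(u_0)$ need not be positive for an arbitrary $F$. Definition \ref{two} itself only uses the relations $\rho = F^{-1}(u)$ and $u = F(\rho)$, so I would check that the proofs of Theorems \ref{three} and \ref{four} use only positivity of $u_0$ and monotonicity of $F$. If positivity of $\rho_0$ is nevertheless needed, I would apply the translation $\rho\mapsto\rho+L$ of Proposition \ref{one}: replace $F$ by $\tilde F(a)=F(a-L)$, so that $\rho_0+L>0$. The conservation law is then equivalent up to the stated translation.

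With the structure confirmed, Theorem \ref{three} applies, since $u_0>0$ is bounded and $F$ is positive and increasing. It gives the unique admissible solution $(\eta,v)$ of \eqref{tcs}. Theorem \ref{four} then gives the weak diffeomorphism $\gamma$ and the function $\rho=(v/\eta)\circ\gamma^{-1}$, which is an admissible weak solution of $\rho_t+f(\rho)_x=0$. At $t=0$ we have $\eta=1$, $v=u_0$ and $\gamma=\mathrm{id}$. If $v$ is read as the density variable, so that the initial value of $\rho$ is $g(u_0)$ (the convention implicit in Theorem \ref{four}), then the initial condition is $\rho(x,0)=\rho_0(x)$.

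The main obstacle is precisely this matching of variables at $t=0$. In \eqref{tcs} the quantity $v/\eta$ appears inside $F$, so the convention in Theorem \ref{four} must be read with $v$ taking the values $g(u_0)$ where appropriate. I would resolve it by following Definition \ref{two}: identify $\gamma_t=u\circ\gamma$ with $u=F(\rho)$, which gives $\rho(\cdot,0)=g(u_0)=\rho_0$ consistently. Uniqueness of the correspondence then follows from the uniqueness statements in Theorems \ref{three} and \ref{four}.
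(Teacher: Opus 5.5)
Your route is the one the paper intends. The paper gives no separate proof; the proposition is Theorems \ref{three} and \ref{four} applied starting from \eqref{tcs}. The gap is in how you handle the data of $v$.

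Theorem \ref{four} gives $\rho(\cdot,0)=v(\cdot,0)$ and $\gamma_t(\cdot,0)=F(v(\cdot,0))$. So the conclusion $\rho(\cdot,0)=F^{-1}(u_0)$ holds only if $v(\cdot,0)=\rho_0=F^{-1}(u_0)$; with \eqref{tcs} read literally you get $\rho(\cdot,0)=u_0$. Taking $v(\cdot,0)=\rho_0$ is the right reading. It is how the paper uses \eqref{tcs} in Theorem \ref{main}, where $v=\sqrt{b'(u_0)}=g(u_0)$, and you reach it in your last paragraph.

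However, you invoke Theorem \ref{three} earlier with hypotheses on $u_0$. Once the data of $v$ is $\rho_0$, Theorem \ref{three} must be checked for $\rho_0$: it must be positive and $BV$, and $F$ must be increasing on its range. Positivity of $F$ there is automatic, since $F(\rho_0)=u_0>0$. Positivity of $\rho_0$ is not automatic, and it cannot be dropped. The Jacobian of \eqref{tcs} has eigenvalues $0$ and $F'(v/\eta)\,v/\eta^2$, which coincide where $v=0$, so strict hyperbolicity fails there; the paper itself notes that $v$ must be nonzero for global solutions. Your plan to ``check that the proofs use only positivity of $u_0$'' therefore cannot succeed.

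Your fallback translation is also wrong. Under $\tilde\rho=\rho+L$ the flux becomes $f(a-L)=(a-L)F(a-L)$, so $\tilde F(a)=(a-L)F(a-L)/a$, not $F(a-L)$. With your choice $\tilde F(a)=F(a-L)$, the law satisfied by $\rho=\tilde\rho-L$ is $\rho_t+\big((\rho+L)F(\rho)\big)_x=0$, which is a different equation. More basically, changing $F$ changes the Temple system you were given, so no such move can prove a statement about that system.

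The correct repair is to make positivity (and $BV$) of $\rho_0=F^{-1}(u_0)$ an explicit hypothesis. In other words, read the proposition's positivity and monotonicity conditions as conditions on the data of $v$; this is exactly what the normalization of Proposition \ref{one} supplies in the forward direction. The proof is then immediate. Theorem \ref{three} gives $v\equiv\rho_0$, and by Theorem \ref{four}, $\rho=(\rho_0/\gamma_x)\circ\gamma^{-1}$ is an admissible weak solution of $\rho_t+(\rho F(\rho))_x=0$ with $\rho(\cdot,0)=\rho_0$ and $\gamma_t(\cdot,0)=u_0$.

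Two minor points, neither needed for the argument:
\begin{itemize}
\item $g=F^{-1}$ is defined on the range of $F$, so what you need is that $u_0$ takes values there, not that $F$ is monotone on the range of $u_0$.
\item $g\in C^2$ would require $F'>0$, not just strict monotonicity.
\end{itemize}
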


A more precise description of the relation between Eulerian
and Lagrangian versions of a scalar conservation law is that beginning with \eqref{csc}, if $\rho_0$
is bounded and positive and $f(\rho)/\rho$ is strictly increasing, there is a direct correspondence 
between $\gamma$ and $\rho$ as given in Theorem \ref{four}.
For a Cauchy problem \eqref{csc} where one or both of those conditions is not satisfied, there is a
two-parameter family of affinely equivalent Cauchy problems, each uniquely associated with a
diffeomorphism formulation, but the original problem has only this indirect correspondence with a path
in the group of diffeomorphisms.

%%%%%%%%%%%%%%%%
\section{Proof of Theorem \ref{main}} \label{A}

\begin{proof}
Consider an action functional $\b$;
\begin{equation} \label{baction}
 \b(\gamma) = \int_0^c\int_M b\big(\gamma_t\circ\gamma^{-1}\big)\,dx\,dt\,.
\end{equation}
A change of variables in the integral eliminates the need to
calculate derivatives of the inverse function.
Let $x\mapsto y=\gamma(x,t)$; then
$$  \b(\gamma) =\int_0^c\int_M b(\gamma_t)\gamma_x
\,dx\,dt\,.$$
The variational integral is 
$$ 
 \b(\gamma+\veps\zeta) = 
\int_0^c\int_M b(\gamma_t+\veps\zeta_t)(\gamma_x+\veps\zeta_x)\,dx\,dt\,.
$$
Then
\begin{align*} \frac{d}{d\veps} \b\big|_{\veps=0} &=  \frac{d}{d\veps}
\int_0^c\int_M b(\gamma_t+\veps\zeta_t)(\gamma_x+\veps\zeta_x)\,dx\,dt\,\bigg|_{\veps=0}\\
&= \int_0^c\int_M b'(\gamma_t)\zeta_t\gamma_x + b(\gamma_t) \zeta_x\,dx\,dt\\
&=- \int_0^c\int_M\zeta\left[ \gamma_x b''(\gamma_t)\gamma_{tt} +2 b'(\gamma_t) 
\gamma_{tx}\right]\,dx\,dt\,,\\
&=-\int_0^c\int_M \zeta b'(\gamma_t)\gamma_x
\left[\frac{b''(\gamma_t)}{b'(\gamma_t)}\gamma_{tt} 
+ 2 \frac{\gamma_{tx}}{\gamma_x}\right]\,dx\,dt \\
&= 0\,, \label{ahaeqn}
\end{align*}
where integrations by parts are carried out with respect to $t$ and $x$, noting that there are no contributions
from the boundaries.
This relation holds for all choices of $\zeta$, and so
the product $\zeta b' \gamma_x$ is an arbitrary function, since $b'\neq 0$ and $\gamma_x\neq 0$.
We obtain the equation
\begin{equation} \label{ahaeqn}
 \frac{b''(\gamma_t)}{b'(\gamma_t)} \gamma_{tt} + \frac{2\gamma_{tx}}{\gamma_x} 
=  \frac{\partial}{\partial t} \big[\log\big(b'(\gamma_t)\gamma_x^2\big)\big] = 0\,,
 \end{equation}
so $b'(\gamma_t)\gamma_x^2$ is constant.
Since $\gamma_t(x,0)=u_0(x)$ and $\gamma_x(x,0)=1$,
we have
\begin{equation*} 
b'(\gamma_t)= \frac{b'(u_0)}{\gamma_x^2}\,.
\end{equation*}
As $b'$ is invertible, this yields a Hamilton-Jacobi equation for $\gamma$:
$$ \gamma_t = B\left(\frac{b'(u_0)}{\gamma_x^2}\right)\,,$$
where $B^{-1}= b'$.
Comparing this to \eqref{tcs} shows that  $\gamma$ is the diffeomorphism
form of a scalar conservation law $\rho_t+ (\rho F(\rho))_x=0$ with the function $F$ given by 
$F(\xi) = B(\xi^2)$.\\
\end{proof}

\begin{proof}[{\sc{of Corollary}} \ref{coro}]
Defining $u$ by $u=\gamma_t\circ\gamma^{-1}$ and calculating derivatives of $\gamma_t=u\circ\gamma$,
we find
$$ \gamma_{tt} = (uu_x+u_t)\circ \gamma\quad \textrm{and}\quad \gamma_{tx} =
\gamma_x (u_x\circ\gamma)\,,$$ 
and so \eqref{ahaeqn} (after dropping the composition with $\gamma$) becomes
$$ u_t+ \left( u+ \frac{2b'(u)}{b''(u)}\right) u_x=0\,.$$
In the notation of Definition \ref{two}, with $u=F(\rho)$ and $\rho = g(u)$,
the conservation law for $\rho$ is $g_t+(\rho g)_x=0$, which,
written as a transport equation for $u$, is
$$ u_t+\left(u+\frac{g(u)}{g'(u)}\right)u_x=0\,.$$
Comparing these, we have 
\begin{equation} \label{ddefb}
\frac{b''(u)}{2 b'(u)} = \frac{g'(u)}{g(u)}\,,\quad \textrm{or} \quad \log b'(u) = 2 \log g(u) \quad
\textrm{or} \quad b'(u) = \big(g(u)\big)^2\,,
\end{equation}
and so $b$ is an antiderivative of $g^2$.\\
\end{proof}

The choice $b(u)=u^2/2$ is the case considered by Constantin and Kolev, and yields the
transport equation $u_t+3uu_x=0$.
In this case, $b'(u)=u$ and so $g(u) = \sqrt{u}$; $F=g^{-1}$, so $F(\rho) = \rho^2$, and the
conservation law is $\rho_t + (\rho^3)_x=0$.
Expressed in terms of the function $u$, this is $\sqrt{u}_t+ (u^{3/2})_x=0$.
For the conservation law to be valid, $u$ is necessarily non-negative.

The example of Burgers' equation, $\rho_t+(\rho^2/2)_x=0$, gives
$u=\rho/2$, so $g(u) = 2u$, and  $b(u)=4 u^3/3$.
This corresponds to an $L^p$ metric with $p=3/2$.
%%%%%%%%%%%
%%%%%%%%%%%

%%%%%%%%%%%%%
\section{Some Examples for Systems}
\label{systs}
We present a pair of positive results for systems of two conservation laws in a single space variable.
\subsection{Isentropic Gas Dynamics}
The diffeomorphism equation obtained in \cite{HKT} for the isentropic gas dynamics system,
\begin{equation*}
 \rho_t+ (u\rho)_x =0\,,\quad 
(u\rho)_t+ \big(\rho u^2/2+p(\rho)\big)_x  =0 \,,
\end{equation*}
can be written as a second-order conservation equation
for $\gamma_t$, $\gamma_x$ and $\rho_0$ (the three variables defining the state of the system):
\begin{equation} \label{systeq} 
\big(\rho_0\gamma\big)_{tt} +\left(p\left(\frac{\rho_0}{\gamma_x}\right)\right)_x=0\,.
\end{equation}
Carrying out the differentiation in \eqref{systeq} gives
\begin{equation} \label{peqn} 
\rho_0 \gamma_{tt} - \left(p'\left(\frac{\rho_0}{\gamma_x}\right)\frac{\rho_0}{\gamma_x^2}\right) \gamma_{xx}
+ p'\left(\frac{\rho_0}{\gamma_x}\right)\frac{\rho_0'}{\gamma_x} =0\,.
\end{equation}
Equation  \eqref{peqn} can be obtained as an extremal
of an action
\begin{equation} \label{b2def} 
\b(\gamma; \rho_0) = \int_0^c\int_M b\big(\gamma_t\circ\gamma^{-1}, \gamma_x\circ\gamma^{-1},
\rho_0\circ\gamma^{-1}\big)\,dx\,dt\,
\end{equation}
with  $b= b(r,s,q)$.
(Note that equation (2.6) in \cite{HKT} states that
$ \rho = (\rho_0/\gamma_x) \circ \gamma^{-1}$.)
Apply the change of variables $y=\gamma^{-1}(x,t)$, 
 introducing a factor $\gamma_x$ throughout.
Then, with
$$ \b(\gamma; \rho_0) = \int_0^c\int_M b(\gamma_t, \gamma_x,\rho_0)\gamma_x\,dx\,dt\,,
$$
 we calculate
$$ \frac{d}{d\veps}  \int_0^c\int_M b(\gamma_t+\veps\zeta_t, \gamma_x+\veps\zeta_x,\rho_0)
(\gamma_x+\veps\zeta_x)\,dx\,dt\bigg|_{\veps=0}\,.
$$
The variational equation is
\begin{align*} 0=&
\iint b_r(\gamma_t,\gamma_x,\rho_0)\zeta_t\gamma_x+ b_s(\gamma_t,\gamma_x,\rho_0))\zeta_x\gamma_x
	+b(\gamma_t,\gamma_x,\rho_0)\zeta_x \,dx\,dt\\
	=&-\iint \zeta\big[ \big(b_r(\gamma_t,\gamma_x,\rho_0)\gamma_x\big)_t +
	\big( b_s(\gamma_t,\gamma_x,\rho_0)\gamma_x +b(\gamma_t,\gamma_x,\rho_0)\big)_x \big]\,dx\,dt\\
	=&-\iint \zeta\big[\gamma_xb_{rr}\gamma_{tt}+2\big(\gamma_xb_{rs} +b_r   \big)\gamma_{xt} 
	+ \big(\gamma_xb_{ss}	+2b_s\big)\gamma_{xx}+\rho_0'\big(b_{sq}\gamma_x + b_q\big)	\big]\,dx\,dt\,.
\end{align*}	
The result (divided by $\gamma_x$) is 
\begin{equation} \label{aha}
b_{rr}\gamma_{tt}+2\left(b_{rs} +\frac{b_r}{\gamma_x}  \right)\gamma_{xt} 
	+ \left(b_{ss}	+\frac{2b_s}{\gamma_x}\right)\gamma_{xx}+ \rho_0'\left(b_{sq}+
	 \frac{b_q}{\gamma_x}\right)	=0\,.
\end{equation}
The matching of coefficients between \eqref{peqn} and \eqref{aha} is a matter of algebra unrelated to
the functional behavior of $\gamma(x,t)$, so one can calculate the form of $b$,
replacing $\gamma_t$ by $r$, $\gamma_x$ by $s$ and $\rho_0$ by $q$.
We omit the straightforward calculation.
It yields the result, easily checked: 
$$ b(\gamma_t,\gamma_x,\rho_0) = \frac{\rho_0(x) \gamma_t^2}{2\gamma_x} 
+\frac{1}{\gamma_x}\int^{\gamma_x} p\left(\frac{\rho_0(x)}{\sigma}\right)\,d\sigma $$
for the action function.

%%%%%%%%%%%%%%
\subsection{The One-Dimensional Nonlinear Wave Equation}
The nonlinear wave equation in one space dimension is
\begin{equation}
\label{nlwe} 
\rho_{tt} = (p'(\rho)\rho_x)_x \,.
\end{equation}
A diffeomorphism  corresponding to \eqref{nlwe} is found by
assigning a velocity variable.
We may proceed by
writing the equation as a first-order system:
\begin{align} \label{firsteq}
 \rho_t+ (u\rho)_x &=0\\
(u\rho)_t+ p(\rho)_x & =0 \,,\label{secondeq}
\end{align}
and continuing as in \cite{HKT} for the isentropic gas dynamics system.
Assuming classical solutions, define $\gamma$ by
\begin{equation} \label{newgammadef} \gamma_t = u(\gamma(x,t),t) \equiv u\circ\gamma\,,
\end{equation}
and define $\zeta= \rho\circ\gamma$, so \eqref{firsteq} becomes
$$ \zeta_t= (\rho_t+ u\rho_x)\circ\gamma = -(\rho u_x)\circ\gamma\,.
$$
Differentiate \eqref{newgammadef} with respect to $x$ to obtain 
$\gamma_{tx} = u_x\circ\gamma\cdot \gamma_x$, so $u_x\circ\gamma = \gamma_{tx}/\gamma_x$
and therefore 
$$ \zeta_t = -\zeta \frac{\gamma_{xt}}{\gamma_x}\,.$$
Integrating and applying the initial data gives
$$ \rho = \frac{\rho_0}{\gamma_x}\circ\gamma^{-1}\,.$$
(This works because the first equation is the same as for the isentropic gas dynamics system.)
To convert \eqref{secondeq}, we calculate the terms $(u \rho)_t$ and $p(\rho)_x$,
obtaining 
\begin{equation} \label{difftemp}-\left(\frac{\gamma_t}{\gamma_x}\right)
\cdot\left(\frac{\gamma_t\rho_0}{\gamma_x}\right)_x
+\left(\frac{\gamma_t\rho_0}{\gamma_x}\right)_t
+\frac{1}{\gamma_x}\left[p\left(\frac{\rho_0}{\gamma_x}\right)\right]_x=0\,.
\end{equation}
Carrying out the differentiations, multiplying by $\gamma_x^2$ and dividing by $\rho_0$ gives
\begin{equation} \label{nlwediffeo}
\gamma_x\gamma_{tt}
-2\gamma_t\gamma_{tx}
-\frac{1}{\gamma_x}\left[p'\left(\frac{\rho_0}{\gamma_x}\right)-\gamma_t^2\right]\gamma_{xx}
+\frac{\rho_0'}{\rho_0}\left[p'\left(\frac{\rho_0}{\gamma_x}\right)-\gamma_t^2\right]=0~.
\end{equation}
This is again a quasilinear equation, and it is easily checked that it is hyperbolic.

\begin{proposition}
Equation \eqref{nlwediffeo} is hyperbolic.
\end{proposition}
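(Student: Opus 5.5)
Treat \eqref{nlwediffeo} as a second-order quasilinear equation in $\gamma$ and classify it through its principal symbol. Only the terms with second derivatives count; the term in $\rho_0'/\rho_0$ is lower order and does not affect the type. Write the principal part as
\[
A\gamma_{tt}+2B\gamma_{tx}+C\gamma_{xx},\qquad A=\gamma_x,\quad B=-\gamma_t,\quad C=-\frac{1}{\gamma_x}\Big[p'\Big(\frac{\rho_0}{\gamma_x}\Big)-\gamma_t^2\Big].
\]
The equation is hyperbolic exactly when the discriminant $B^2-AC$ is positive. Equivalently, the characteristic equation $A\lambda^2-2B\lambda+C=0$ for speeds $dx/dt=\lambda$ must have two distinct real roots.

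Compute the discriminant directly:
\[
B^2-AC=\gamma_t^2+\Big[p'\Big(\frac{\rho_0}{\gamma_x}\Big)-\gamma_t^2\Big]=p'\Big(\frac{\rho_0}{\gamma_x}\Big).
\]
The $\gamma_t^2$ terms cancel. Hyperbolicity therefore reduces to $p'(\rho)>0$ at $\rho=\rho_0/\gamma_x$. This is the standing assumption for the nonlinear wave equation \eqref{nlwe}, which is itself hyperbolic only when $p'>0$. It is also the condition that makes \eqref{firsteq}--\eqref{secondeq} a strictly hyperbolic system.

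To make this meaningful, I would check that $\rho_0/\gamma_x$ lies in the range where $p'>0$. Take $\rho_0>0$, and note $\gamma_x>0$ because $\gamma$ is an orientation-preserving diffeomorphism with $\gamma_x(x,0)=1$. Then $\rho_0/\gamma_x=\rho\circ\gamma$ is a positive density, and $p'>0$ holds there. The characteristic speeds are
\[
\lambda_\pm=\frac{-\gamma_t\pm\sqrt{p'(\rho_0/\gamma_x)}}{\gamma_x}.
\]
These are real and distinct, and they match the Eulerian speeds $u\pm\sqrt{p'(\rho)}$ after the change of frame through $\gamma$. That match serves as a consistency check.

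The main obstacle is not the algebra but the precise hypotheses. I would state explicitly that $p'>0$ on the range of $\rho$ and that $\gamma_x>0$, with $\rho_0>0$. The second-order form is only valid while $\gamma$ stays a classical diffeomorphism, so I would also make clear that hyperbolicity is asserted for classical solutions.
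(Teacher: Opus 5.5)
Your proof is correct and follows the same route as the paper: you identify the same principal coefficients and compute $B^2-AC=p'(\rho_0/\gamma_x)$, which the paper writes as $AC-B^2=-p'$ and then invokes $p$ increasing. One slip in your consistency check: the system \eqref{firsteq}--\eqref{secondeq} has Eulerian speeds $\pm\sqrt{p'(\rho)}$, not $u\pm\sqrt{p'(\rho)}$, because its momentum flux has no $\rho u^2$ term; it is $\pm\sqrt{p'}$ that maps to your $\lambda_\pm$ under $dX/dt=\gamma_t+\gamma_x\,dx/dt$, whereas $u\pm\sqrt{p'}$ would map to $\pm\sqrt{p'}/\gamma_x$.
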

\begin{proof}
The principal part has the form $A\gamma_{tt} + 2B\gamma_{xt} + C\gamma_{xx}$, and the equation is
strictly hyperbolic if $AC -B^2<0$.
A calculation gives
$$ AC-B^2 = -\left[p'\left(\frac{\rho_0}{\gamma_x}\right)-\gamma_t^2\right]-\gamma_t^2
= -p'\left(\frac{\rho_0}{\gamma_x}\right)\,,
$$
which is negative under the standard assumption that $p$ is an increasing function.
\end{proof}
An important consideration is whether \eqref{nlwediffeo} can be put in conservation form. 
If not, then the correspondence between classical solutions of the system cannot be extended
to a correspondence between weak solutions.
To examine this, go back to \eqref{difftemp} and multiply by $\gamma_x$.
\begin{proposition}
Equation \eqref{difftemp} has the conservation form
$$ (\rho_0\gamma_t)_t  +\left( p\left(\frac{\rho_0}{\gamma_x}\right)     
  - \frac{\rho_0\gamma_t^2}{\gamma_x}\right)_x=0\,.
$$
\end{proposition}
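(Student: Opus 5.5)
The plan is a direct algebraic verification: multiply \eqref{difftemp} by $\gamma_x$ and recognize the result as the stated divergence. No earlier result is needed beyond the fact that $\rho_0=\rho_0(x)$ does not depend on $t$ and that we are working with classical ($\mathcal C^2$) $\gamma$ with $\gamma_x\neq 0$, as in the derivation of \eqref{difftemp}.

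To keep the computation short, I will introduce the auxiliary quantity
$$ w \equiv \frac{\rho_0\gamma_t}{\gamma_x}\,, $$
which is just $(u\rho)\circ\gamma$. In terms of $w$, multiplying \eqref{difftemp} by $\gamma_x$ gives
$$ \gamma_x w_t - \gamma_t w_x + \left[p\left(\frac{\rho_0}{\gamma_x}\right)\right]_x = 0\,. $$
The pressure term is already a pure $x$-derivative, so it contributes directly to the flux. It remains to show that $\gamma_x w_t-\gamma_t w_x$ is a divergence.

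For this I will use the two identities $\rho_0\gamma_t=\gamma_x w$ and $\rho_0\gamma_t^2/\gamma_x=\gamma_t w$. The product rule, together with $\rho_0$ being independent of $t$, gives
$$ (\rho_0\gamma_t)_t=\gamma_x w_t+\gamma_{xt}w\,,\qquad \left(\frac{\rho_0\gamma_t^2}{\gamma_x}\right)_x=\gamma_{tx}w+\gamma_t w_x\,. $$
Subtracting these, the mixed terms $\gamma_{xt}w$ cancel by equality of mixed partials. Hence
$$ (\rho_0\gamma_t)_t-\left(\frac{\rho_0\gamma_t^2}{\gamma_x}\right)_x=\gamma_x w_t-\gamma_t w_x\,. $$
Substituting this back into the multiplied equation yields exactly the claimed conservation form.

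The only delicate point is bookkeeping: the factor $\gamma_x$ must be applied to every term of \eqref{difftemp}, including the bracketed pressure term. That factor precisely cancels the $1/\gamma_x$ in front of the bracket, turning it into an exact $x$-derivative. The cancellation of $\gamma_{xt}w$ is where the $\mathcal C^2$ assumption is used. With these two points checked, there is no genuine obstacle; the identity is purely algebraic.
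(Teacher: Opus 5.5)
Your proof is correct and follows the paper's route: the paper's only indication is to multiply \eqref{difftemp} by $\gamma_x$, and it leaves the algebra to the reader. Your identity $(\rho_0\gamma_t)_t-(\rho_0\gamma_t^2/\gamma_x)_x=\gamma_x w_t-\gamma_t w_x$, with $w=\rho_0\gamma_t/\gamma_x$, supplies exactly that omitted verification.
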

We also note that this can be written as a first-order system, similar to the 
system in \cite{HKT} that describes isentropic gas dynamics.
\begin{proposition}
With the definitions $\eta = \gamma_x$, $w=\rho_0\gamma_t$ and $v=\rho_0$, the diffeomorphism
formulation takes the form
\begin{align*} \eta_t  - \left(\frac{w}{v}\right)_x&=0\\
w_t + \left(p\left(\frac{v}{\eta}\right) - \frac{w^2}{\eta v}\right)_x &=0\\
v_t& = 0\,.
\end{align*}
\end{proposition}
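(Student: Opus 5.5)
The plan is to read off each equation of the system directly from objects already in hand: the definition of $\gamma$ and the conservation form of \eqref{difftemp} in the preceding proposition. The equation $v_t=0$ is immediate, because $v=\rho_0(x)$ carries no $t$-dependence. For the first equation, I would use equality of mixed partials for the (classical) $\gamma$:
$$\eta_t=\gamma_{xt}=(\gamma_t)_x=\left(\frac{\rho_0\gamma_t}{\rho_0}\right)_x=\left(\frac{w}{v}\right)_x .$$
Here I use that $\rho_0>0$, so division by $v$ is legitimate. This is the same compatibility equation that appears in the isentropic gas dynamics system of \cite{HKT}, since the mass equation \eqref{firsteq} is shared.

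For the second equation I would start from the conservation form
$$(\rho_0\gamma_t)_t+\left(p\left(\frac{\rho_0}{\gamma_x}\right)-\frac{\rho_0\gamma_t^2}{\gamma_x}\right)_x=0,$$
and substitute term by term. First, $\rho_0\gamma_t=w$. Second, $\rho_0/\gamma_x=v/\eta$. Third, $\rho_0\gamma_t^2/\gamma_x=(\rho_0\gamma_t)^2/(\rho_0\gamma_x)=w^2/(v\eta)$. This gives exactly $w_t+\bigl(p(v/\eta)-w^2/(\eta v)\bigr)_x=0$.

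If the preceding proposition is to be verified rather than assumed, I would first multiply \eqref{difftemp} by $\gamma_x$. I would then expand $(\rho_0\gamma_t^2/\gamma_x)_x$ and check that
$$\gamma_x\left(\frac{\gamma_t\rho_0}{\gamma_x}\right)_t-\gamma_t\left(\frac{\gamma_t\rho_0}{\gamma_x}\right)_x=(\rho_0\gamma_t)_t-\left(\frac{\rho_0\gamma_t^2}{\gamma_x}\right)_x .$$
This identity holds because $\gamma_{xt}=\gamma_{tx}$: the terms involving $\gamma_{xt}$ cancel against those coming from differentiating $\gamma_t^2/\gamma_x$ in $x$.

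The computation itself is routine, so the only real point requiring care is the direction of the correspondence. For classical solutions the identification is clear. Conversely, given $(\eta,w,v)$ solving the system, the first equation is precisely the compatibility (curl-free) condition that lets one recover $\gamma$ from $\gamma_x=\eta$, $\gamma_t=w/v$, $\gamma(x,0)=x$. For weak solutions I would appeal to the same Radon–Nikodym argument as in Theorem \ref{four}, which is where I expect any genuine difficulty to lie. For the formal statement as given, however, the substitution above suffices.
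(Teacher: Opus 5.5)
Your proposal is correct and follows the route the paper implicitly takes (the paper gives no separate proof). Namely, $v_t=0$ is trivial, the first equation is $\gamma_{xt}=\gamma_{tx}$ rewritten using $w/v=\gamma_t$, and the second is a direct substitution of $w$, $v/\eta$ and $w^2/(\eta v)=\rho_0\gamma_t^2/\gamma_x$ into the conservation form of the preceding proposition. Your check of that conservation form from \eqref{difftemp} is also correct, since it reduces to the identity $\gamma_x m_t-\gamma_t m_x=(\gamma_x m)_t-(\gamma_t m)_x$ with $m=\rho_0\gamma_t/\gamma_x$.
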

Determining a function $b(\gamma_t,\gamma_x, \rho_0)$ for which \eqref{nlwediffeo} defines an
extremum is carried out in the same way as for the previous example and yields
$$ b(r,s,q) = \frac{1}{2}\left(\frac{rq}{s}\right)^2    -P\left(\frac{q}{s}\right)\,,\quad \textrm{or} \quad
b(\gamma_t,\gamma_x,\rho_0) =\frac{1}{2}\left(\frac{\rho_0\gamma_t}{\gamma_x}\right)^2  
  -P\left(\frac{\rho_0}{\gamma_x}\right)\,.
$$
where $P'(\cdot) = p(\cdot)$.
Since there was some degree of choice in the selection of the variable $u$ that determined $\gamma$,
there may be other actions $\b$ that lead back to the same nonlinear wave equation.
%%%%%%%%%%%%%
%%%%%%%%%%%%%%%

\bibliography{diffeo}
\end{document}